\theoremstyle{definition}
\newtheorem{remark}{Remark}
\theoremstyle{plain}
\newtheorem{theorem}{Theorem}
\newtheorem{corollary}{Corollary}
\theoremstyle{remark}
\newcommand{\C}{\mathbb{C}}
\newcommand{\K}{\mathbb{K}}
\newcommand{\N}{\mathbb{N}}
\newcommand{\Q}{\mathbb{Q}}
\newcommand{\R}{\mathbb{R}}
\newcommand{\Z}{\mathbb{Z}}
\newcommand{\Acal}{\mathcal{A}}
\newcommand{\Dcal}{\mathcal{D}}
\newcommand{\Hcal}{\mathcal{H}}
\newcommand{\Ical}{\mathcal{I}}
\newcommand{\Ncal}{\mathcal{N}}
\newcommand{\diag}{\operatorname{diag}\,}
\newcommand{\oh}{{\scriptstyle{{\cal O}}}}
\let\geq\geqslant
\begin{document}

\begin{center}
\begin{huge}
\begin{spacing}{1.0}
\textbf{The Hilbert Modular Group and Orthogonal Groups}  
\end{spacing}
\end{huge}

\bigskip
by
\bigskip

\begin{large}
\textbf{Adrian Hauffe-Waschbüsch\footnote{Adrian Hauffe-Waschbüsch, Lehrstuhl A für Mathematik, RWTH Aachen University, D-52056 Aachen, adrian.hauffe@rwth-aachen.de}} and
\textbf{Aloys Krieg\footnote{Aloys Krieg, Lehrstuhl A für Mathematik, RWTH Aachen University, D-52056 Aachen, krieg@rwth-aachen.de}}
\end{large}
\vspace{0.5cm}\\
June 2022
\vspace{1cm}
\end{center}
\begin{abstract}
\textbf{Abstract.}We derive an explicit isomorphism between the Hilbert modular group and certain congruence subgroups on the one hand and particular subgroups of the special orthogonal group $SO(2,2)$ on the other hand. The proof is based on an application of linear algebra adapted to number theoretical needs.
\end{abstract}
\noindent\textbf{Keywords:} Hilbert modular group, congruence subgroup, orthogonal group,  discriminant kernel  \\[1ex]
\noindent\textbf{Classification: 11F41, 11F55}
\vspace{2ex}\\

\newpage
\section{Introduction}

A generalization of the classical elliptic moduar group $SL_2(\Z)$ was introduced by Blumenthal \cite{Bl} more than 100 years ago. $SL_2(\oh_\K)$ is the ordinary Hilbert modular group over a totally real number field  $\K$. Meanwhile the theory of Hilbert modular forms has been developped into various directions (cf. \cite{Fr}, \cite{Gar}, \cite{vG}). On the other hand Borcherds \cite{Bo2} established a product expansion for modular forms on $SO(2,n)$. If $n=2$ and $\K$ is a real-quadratic field these groups are basically isomorphic. 
More precisely $PSL_2(\oh_\K)$ is isomorphic to discriminant kernel of the orthogonal group $SO(2,2)$ (cf. Theorem \ref{theorem_2}). Borcherds theory has led to important examples of Hilbert modular forms (cf. \cite{Bru}), which in some cases allow to describe the graded ring of Hilbert modular forms (cf. \cite{May}, \cite{Will2}). 
As Borcherds products are usually related with discriminant kernels, it is interesting to describe the associated congruence subgroups of $PSL_2(\R)$ precisely. 

In this paper we derive an explicit isomophism based on linear algebra. Our approach can be extended to congruence subgroups. The results below can serve as an explicit dictionary, when passing between Borcherds theory and classical Hilbert modular forms.

Given a non-degenerate symmetric even matrix $T\in\Z^{m\times m}$ let 
\[
 SO(T;\R):= \{U\in SL_m(\R);\; U^{tr} TU=T\}
\]
denote the attached special orthogonal group. Let $SO_0(T;\R)$ stand for the connected component of the identity matrix $I$ and $SO_0(T;\Z)$ for the subgroup of integral matrices. The \emph{discriminant kernel} 
\begin{gather*}\tag{1}\label{gl_1}
 \Dcal(T;\Z):= \{U\in SO_0(T;\Z);\; U\in I + \Z^{m\times m} T\}
\end{gather*}
is clearly a normal subgroup of $SO_0(T;\Z)$. Given $N\in\N$ we set 
\begin{gather*}\tag{2}\label{gl_2}
 T_N:= \begin{pmatrix}
        0 & 0 & N \\ 0 & T & 0 \\ N & 0 & 0
       \end{pmatrix}
\end{gather*}
for the orthogonal sum with the rescaled hyperbolic plane.

\section{The normalizer of the Hilbert modular group}
\label{sect_2}

Throughout this paper let $\K=\Q(\sqrt{m})$, $m\in\N$, $m>1$ squarefree, be a real-quadratic number field with \emph{ring of integers} and \emph{discriminant}
\begin{gather*}\tag{3}\label{gl_3}
 \oh_\K = \Z+\Z\omega_\K,\quad \omega_\K = \begin{cases}
                                            (m+\sqrt{m})/2, \\
                                            m+\sqrt{m},
                                           \end{cases} \quad d_\K = \begin{cases}
								    m, & \text{if} \; m\equiv 1\bmod{4}, \\
								    4m, & \text{else}.
								    \end{cases}
\end{gather*}
The non-trivial automorphism of $\K$ is given by 
\begin{gather*}\tag{4}\label{gl_4}
 \K\to \K, \quad a=\alpha +\beta\sqrt{m} \; \mapsto \; a' = \alpha -\beta\sqrt{m}.
\end{gather*}
If $\ell \in\N$, $\sqrt{\ell\,}\notin \K$ we extend \eqref{gl_4} to $\K(\sqrt{\ell\,})$ via $\sqrt{\ell\,}' = \sqrt{\ell\,}$. Moreover $a\gg 0$ means that $a\in \K$ is \emph{totally positive}, i.e. $a>0$ and $a'>0$.

\vspace{1ex}
The (ordinary) \emph{Hilbert modular group} (cf. \cite{Bl}, \cite{Fr}) is given by
\[
  \Gamma_\K : = SL_2(\oh_\K).
\]
At first we are going to describe the \emph{normalizer}
\[
 \Ncal_\K : = \{M\in SL_2(\R);\; M^{-1} \Gamma_\K M = \Gamma_\K\}
\]
of $\Gamma_\K$ in $SL_2(\R)$. Therefore let $\Ical(L)$ denote the ideal generated by the entries of a matrix $0\neq L \in \K^{2\times 2}$.

\begin{theorem}\label{theorem_1} 
 If $\K$ is a real-quadratic number field, the normalizer $\Ncal_\K$ is equal to 
\begin{gather*}\tag{5}\label{gl_5}
 \left\{\frac{1}{\sqrt{\det L}}L;\; L\in\oh^{2\times 2}_\K,\, \det L>0,\, \Ical(L)^2 = \oh_\K \det L\right\}.
\end{gather*}
\end{theorem}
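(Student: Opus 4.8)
The plan is to prove the two inclusions separately, treating the containment of the stated set in $\Ncal_\K$ as the routine direction and the reverse as the substantial one.

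For the easy inclusion I would take $L\in\oh_\K^{2\times2}$ with $\det L>0$ and $\Ical(L)^2=\oh_\K\det L$, and set $M=\frac{1}{\sqrt{\det L}}L$, so that $\det M=1$ and $M\in SL_2(\R)$. Writing $M^{-1}=\frac{1}{\sqrt{\det L}}\operatorname{adj}(L)$, for any $A\in\Gamma_\K$ one computes $M^{-1}AM=\frac{1}{\det L}\operatorname{adj}(L)\,A\,L$ and $MAM^{-1}=\frac{1}{\det L}L\,A\,\operatorname{adj}(L)$. Since every entry of $\operatorname{adj}(L)$ is, up to sign, an entry of $L$, we have $\Ical(\operatorname{adj}(L))=\Ical(L)$, so each entry of these products lies in $\frac{1}{\det L}\Ical(L)\,\oh_\K\,\Ical(L)=\frac{1}{\det L}\Ical(L)^2=\oh_\K$. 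As conjugation preserves determinants, both $M^{-1}\Gamma_\K M\subseteq\Gamma_\K$ and $M\Gamma_\K M^{-1}\subseteq\Gamma_\K$ hold; since each forces the reverse of the other, $M^{-1}\Gamma_\K M=\Gamma_\K$ and $M\in\Ncal_\K$.

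For the reverse inclusion I would start from $M=\left(\begin{smallmatrix}\alpha&\beta\\\gamma&\delta\end{smallmatrix}\right)\in\Ncal_\K$ and extract arithmetic constraints by conjugating the unipotent generators. Evaluating $M\left(\begin{smallmatrix}1&t\\0&1\end{smallmatrix}\right)M^{-1}$ and $M\left(\begin{smallmatrix}1&0\\t&1\end{smallmatrix}\right)M^{-1}$ for $t\in\oh_\K$ and demanding integrality (at $t=1$) yields $\alpha^2,\alpha\gamma,\gamma^2\in\oh_\K$ and $\beta^2,\beta\delta,\delta^2\in\oh_\K$. If say $\alpha\neq0$ this forces $\gamma/\alpha=\alpha\gamma/\alpha^2\in\K$, and using $\det M=1$ one gets $\alpha\beta\in\K$ and hence $\beta/\alpha,\delta/\alpha\in\K$; thus all four entries are $\K$-proportional, $M=\rho M_0$ with $M_0\in\K^{2\times2}$ and $\rho=\alpha\in\R$. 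After clearing denominators I may assume $M_0\in\oh_\K^{2\times2}$, and then $\det M_0=\rho^{-2}\in\oh_\K$ with $\det M_0>0$; absorbing a sign into $M_0$ gives $\rho=1/\sqrt{\det M_0}$, i.e. $M=\frac{1}{\sqrt{\det M_0}}M_0$. (Vanishing entries are handled by running the argument through a nonzero entry.)

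It then remains to verify the sharp ideal identity $\Ical(M_0)^2=\oh_\K\det M_0$ for $L:=M_0$. The inclusion $\oh_\K\det M_0\subseteq\Ical(M_0)^2$ is automatic, since $\det M_0$ is a combination of products of pairs of entries. For the reverse I would exploit that $M^{-1}AM=\frac{1}{\det M_0}\operatorname{adj}(M_0)\,A\,M_0\in\Gamma_\K$ for every $A\in\Gamma_\K$: choosing $A=\left(\begin{smallmatrix}1&0\\1&1\end{smallmatrix}\right)$, $\left(\begin{smallmatrix}1&1\\0&1\end{smallmatrix}\right)$ and one further matrix such as $\left(\begin{smallmatrix}2&1\\1&1\end{smallmatrix}\right)$, and reducing the resulting entries modulo $\det M_0$, one reads off that every product of two entries of $M_0$ lies in $\oh_\K\det M_0$, i.e. $\Ical(M_0)^2\subseteq\oh_\K\det M_0$. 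I expect this last step to be the main obstacle: the unipotent conjugations alone only control the within-column products, and producing the mixed products (in particular the pair linked by $\det M_0=\alpha\delta-\beta\gamma$) requires a judicious choice of conjugating elements so that the unwanted terms cancel modulo $\det M_0$. Setting $L=M_0$ then exhibits $M$ in the stated form and completes the proof.
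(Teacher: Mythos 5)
Your proof is correct, and the computations it leaves implicit do check out: the terse claim that $\det M=1$ forces $\alpha\beta\in\K$ is justified by the identity $\alpha\beta=\alpha^2(\beta\delta)-\beta^2(\alpha\gamma)$, and conjugation by your third matrix really does deliver the missing products, since with $M_0=\left(\begin{smallmatrix}a&b\\c&d\end{smallmatrix}\right)$, $e=\det M_0$, the entries of $\operatorname{adj}(M_0)\left(\begin{smallmatrix}2&1\\1&1\end{smallmatrix}\right)M_0$ are $2ad+cd-ab-bc$, $bd+d^2-b^2$, $a^2-ac-c^2$, $ad+ab-cd-2bc$, which modulo $e\oh_\K$ (using the row products already obtained from the two unipotents and $ad\equiv bc \bmod e$) yield $ad,bd,ac,bc\in e\oh_\K$. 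However, your route differs from the paper's in a meaningful way. The paper's entire hard direction rests on one structural observation: $\Gamma_\K$ contains a $\Z$-basis of $\oh^{2\times 2}_\K$, so the normalizer condition upgrades from the group to the order, $M^{-1}\oh^{2\times 2}_\K M=\oh^{2\times 2}_\K$; conjugating the rank-one basis matrices (which are not group elements) then shows in a single stroke that \emph{every} product of two entries of $M$ lies in $\oh_\K$, whereupon the paper sets $L=\alpha M$ for a nonzero entry $\alpha$, so that $\det L=\alpha^2$ and $\Ical(L)^2=\oh_\K\alpha^2$ hold automatically — no separate verification of the ideal identity is needed. You never leave the group, which costs you a two-phase argument: proportionality over $\K$ first, then, after an arbitrary clearing of denominators, a second round of conjugations to force $\Ical(M_0)^2=\oh_\K\det M_0$; that second round is only legitimate because the condition $\Ical(L)^2=\oh_\K\det L$ is invariant under rescaling $L$ by $\Q^*$, a point worth stating. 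The computational cores coincide — your matrices $I+E_{12}$, $I+E_{21}$, $\left(\begin{smallmatrix}2&1\\1&1\end{smallmatrix}\right)$ are exactly the witnesses that $E_{12}$, $E_{21}$, $E_{11}$ lie in the $\Z$-span of $\Gamma_\K$ — so what the paper's packaging buys is brevity (the clever normalization $L=\alpha M$ kills the ideal condition for free), while yours buys independence from the order-theoretic lemma. One small slip in your meta-comment: in the $M^{-1}AM$ direction the two unipotents control within-\emph{row} products of $M_0$ (it is the $MAM^{-1}$ direction that controls columns); nothing in your argument depends on this.
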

\begin{proof}
 Clearly the matrices in \eqref{gl_5} belong to $\Ncal_\K$. As $\Gamma_\K$ contains a $\Z$-basis of $\oh^{2\times 2}_\K$, we conclude that any $M\in \Ncal_\K$ satisfies
 \[
  M^{-1} \oh^{2\times 2}_\K M = \oh^{2\times 2}_\K.
 \]
Inserting the classical basis elements, we see that the product of any two entries of $M$ belongs to $\oh_K$. If $\alpha$ is any non-zero entry of $M$, we get
 \[
  M = \frac{1}{\sqrt{\alpha^2}}L,\; L\in \oh^{2\times 2}_\K, \quad \det L = \alpha^2, \quad \Ical(L)^2 = \oh_\K\alpha^2.
 \]
Thus $M$ belongs to \eqref{gl_5}.
\end{proof}

Now we define the group
\[
 \Sigma_\K = \left\{\frac{1}{\sqrt{\ell}} L;\; \ell \in \N,\, L\in \oh^{2\times 2}_\K\right\}\cap SL_2(\R)\supseteq \Gamma_\K.
\]
Note that $M_1, M_2 \in\Sigma_\K$ satisfy
\[
 (M_1 M_2)' =  \pm M'_1 M'_2 
\]                                         
due to the above extension of \eqref{gl_4}.

Let $\Hcal$ denote the upper half-plane in $\C$. Then a matrix $M=\left(\begin{smallmatrix}
                                                                         \alpha & \beta \\ \gamma & \delta
                                                                        \end{smallmatrix}\right) 
\in \Sigma_\K$ acts on $\Hcal^2$ via 
\begin{gather*}\tag{6}\label{gl_6neu}
 \binom{\tau_1}{\tau_2} \mapsto \binom{M\langle\tau_1\rangle}{M'\langle\tau_2\rangle}, \quad M \langle\tau\rangle = \frac{\alpha \tau + \beta}{\gamma\tau + \delta}.
\end{gather*}
It is well-known that a subgroup $\Gamma \subseteq \Sigma_\K$  acts dicontinuously if and only if the embedded group 
\begin{gather*}\tag{7}\label{gl_7neu}
\bigl\{\pm(M,M');\;M\in\Gamma\bigr\} \subseteq \bigl(SL_2(\R) \times SL_2(\R)\bigr)\big /\{\pm(I,I)\} 
\end{gather*}
is discrete (cf. \cite{Fr}, I.2.1). 
The maximal discontinuous extension $\Gamma^*_\K$ of $\Gamma_\K$ is called the \emph{Hurwitz-Maaß extension}. It is described in \cite{Maa}, \cite{Ba}, \cite{Ha} as
\begin{gather*}\tag{8}\label{gl_6}
  \begin{split}
   \Gamma^*_\K & = \left\{\frac{1}{\sqrt{\det L}}L;\; L\in\oh^{2\times 2}_\K,\, \det L\gg 0,\, \Ical(L)^2 = \oh_\K \det L\right\} \\
   & = \left\{\frac{1}{\sqrt{\ell}} L;\; \ell \in \N,\, L\in \oh^{2 \times 2}_\K,\, \det L = \ell,\, \Ical(L)^2 = \oh_\K \ell\right\} \\
   & = \bigcup_{\ell\in\N\;\text{squarefree}, \,\ell|d_\K} V_\ell \Gamma_\K \subseteq\K,
   \end{split}
\end{gather*}
where we define the generalized Atkin-Lehner matrices by
\begin{gather*}\tag{9}\label{gl_7}
 V_\ell = \frac{1}{\sqrt{\ell\,}} \begin{pmatrix}
                                 \nu\ell & \mu(m+\sqrt{m}) \\ m-\sqrt{m} & \ell
                                \end{pmatrix}, \;\; \nu,\mu\in\Z,\quad \nu\ell-\mu m(m-1)/\ell = 1.
\end{gather*}
If $\Acal_\ell = \Z\ell + \Z\omega_\K$ denotes the integral ideal in $\oh_\K$ of reduced norm $\ell$ for a sqaurefree divisor $\ell$ of $d_\K$, we have 
\begin{gather*}\tag{10}\label{gl_10}
 V_\ell \Gamma_\K = \Gamma_\K V_\ell = \tfrac{1}{\sqrt{\ell\,}} \Acal^{2\times 2}_\ell \cap SL_2(\R).
\end{gather*}
Note that for squarefree divisors $k,\ell$ of $d_\K$, we have 
\begin{align*}
   & \tfrac{1}{\sqrt{k}} \Acal_k = \oh_\K \; \Leftrightarrow \; k = 1 \;\; \text{or}\;\; k = m, \\ 
   & \tfrac{1}{\sqrt{k}} \Acal_k \cdot \tfrac{1}{\sqrt{\ell\,}} \Acal_\ell = \tfrac{1}{\sqrt{f}} \Acal_f, \;\; f= \frac{k\ell}{\gcd(k,\ell)^2}.
 \end{align*}
Note that due to \eqref{gl_6} and Theorem \ref{theorem_1}, in some cases there exists a matrix $M_0\in \Ncal_\K$, $M_0\notin \Gamma^*_\K$. If the fundamental unit $\varepsilon_0$ satisfies $\varepsilon_0\varepsilon'_0 = -1$, we may choose
\begin{gather*}\tag{11}\label{gl_11}
 M_0 = \frac{1}{\sqrt{\varepsilon_0}} \begin{pmatrix}
                                       \varepsilon_0 & 0 \\ 0 & 1
                                      \end{pmatrix}.
\end{gather*}
If $m= \alpha^2+\beta^2$ for some $\alpha,\beta \in\N$, $\alpha$ odd, we choose $\nu,\mu\in\Z$ satisfying $\nu\alpha-2\mu \beta = 1$, $u: = \beta+\sqrt{m} \in \oh_\K$, $uu' = -\alpha^2$,
\begin{align*}
 & L = \begin{pmatrix}
        \mu\alpha+\nu u & \mu u \\ u & \alpha
       \end{pmatrix}, \quad \det L = u> 0, \\
 & \Ical(L) = \oh_\K \alpha + \oh_\K u, \quad \Ical(L)^2 = \oh_\K u, \;\; u'< 0.
\end{align*}
Thus we may choose
\begin{gather*}\tag{12}\label{gl_12}
 M_0 = \tfrac{1}{\sqrt{u}} L
\end{gather*}
in this case. If we apply the theory of ambiguous ideals, then \cite{Le}, 7.6 and 7.8, imply

\begin{corollary}\label{corollary_1} 
 Let $\K$ be a real-quadratic number field with fundamental unit $\varepsilon_0$. If $\varepsilon_0 \varepsilon'_0 = 1$ and $p\mid d_\K$ for some prime $p\equiv 3\bmod{4}$, we have
 \[
  \Ncal_\K = \Gamma^*_\K.
 \]
In any other case
\[
 \Ncal_\K = \Gamma^*_\K \cup M_0 \Gamma^*_\K
\]
holds with $M_0$ from \eqref{gl_11} and \eqref{gl_12}.
\end{corollary}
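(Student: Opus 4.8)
The plan is to read the statement off the explicit descriptions already available: Theorem~\ref{theorem_1} gives $\Ncal_\K$ as the set of $\tfrac{1}{\sqrt{\det L}}L$ with $\det L>0$ and $\Ical(L)^2=\oh_\K\det L$, while \eqref{gl_6} gives $\Gamma^*_\K$ by the same recipe but with the stronger condition $\det L\gg 0$. Since $\gg 0$ means $\det L>0$ \emph{and} $(\det L)'>0$, the two groups can differ only through the sign of $(\det L)'$. First I would make this into a homomorphism
\[
\chi\colon\Ncal_\K\longrightarrow\{\pm 1\},\qquad \tfrac{1}{\sqrt{\det L}}L\longmapsto\operatorname{sign}\bigl((\det L)'\bigr).
\]
It is well defined because replacing $L$ by $cL$ with $c\in\K^\times$ multiplies $\det L$ by $c^2$, and $(c^2)'=(c')^2>0$ leaves the sign unchanged; it is multiplicative because the determinant and the conjugation \eqref{gl_4} are multiplicative. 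Its kernel is exactly $\Gamma^*_\K$, so $\Gamma^*_\K\trianglelefteq\Ncal_\K$ and $\Ncal_\K/\Gamma^*_\K$ embeds into $\{\pm 1\}$. Thus the index is $1$ or $2$, and it equals $2$ precisely when some $L\in\oh_\K^{2\times 2}$ satisfies $\det L>0>(\det L)'$ together with $\Ical(L)^2=\oh_\K\det L$.

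Next I would turn this existence question into arithmetic. Writing $\mathfrak a=\Ical(L)$ and $d=\det L$, the condition $\oh_\K d=\mathfrak a^2$ gives, on passing to ideal norms, $(N\mathfrak a)^2=|d\,d'|$; since $d>0$, the sign constraint $(\det L)'<0$ is the same as $d\,d'=-(N\mathfrak a)^2$. Dividing by the rational integer $N\mathfrak a$ produces $d/N\mathfrak a\in\K^\times$ of norm $-1$. Hence the index equals $2$ only if $-1$ lies in the image of the norm map $\K^\times\to\Q^\times$. For the converse direction I would not argue abstractly that an element of norm $-1$ can be repackaged as such an $L$, but instead exhibit the nontrivial coset directly through the explicit matrices already produced in the text.

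The heart of the matter is therefore to decide when $-1$ is a norm from $\K=\Q(\sqrt m)$. By the Hasse norm theorem this holds iff the Hilbert symbol $(-1,m)_p=1$ for every place $p$. The infinite and unramified symbols are trivial because $m>0$, while for an odd ramified prime $p\mid d_\K$ one computes $(-1,m)_p=\left(\frac{-1}{p}\right)$, which is $-1$ exactly when $p\equiv 3\bmod 4$; the symbol at $2$ is then pinned down by the product formula and causes no further obstruction. Consequently $-1$ is a norm iff no prime $p\equiv 3\bmod 4$ divides $d_\K$, which is precisely the information that the theory of ambiguous ideals supplies through \cite{Le}, 7.6 and 7.8. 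I expect this arithmetic step to be the main obstacle: everything before it is formal bookkeeping around Theorem~\ref{theorem_1}, whereas the genuine input is that a ramified prime $p\equiv 3\bmod 4$ destroys the solvability of $d\,d'=-(N\mathfrak a)^2$.

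It remains to match this criterion with the phrasing of the statement and to read off $M_0$. A ramified prime $p\equiv 3\bmod 4$ already forces $\varepsilon_0\varepsilon_0'=1$ (otherwise $\varepsilon_0$ would have norm $-1$, making $-1$ a norm), so the excluded case ``$\varepsilon_0\varepsilon_0'=1$ and some $p\equiv 3\bmod 4$ divides $d_\K$'' coincides with ``$-1$ is not a norm''; there $\chi$ is trivial and $\Ncal_\K=\Gamma^*_\K$. In every remaining case $-1$ is a norm, and I would verify the nontrivial coset by the given matrices: if $\varepsilon_0\varepsilon_0'=-1$ the matrix \eqref{gl_11} has $\det L=\varepsilon_0$ with $(\det L)'<0$, and otherwise $m$ has no prime factor $\equiv 3\bmod 4$, hence $m=\alpha^2+\beta^2$ with $\alpha$ odd, and \eqref{gl_12} has $\det L=u$ with $u>0>u'$. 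In both cases $\chi(M_0)=-1$, so $\Ncal_\K=\Gamma^*_\K\cup M_0\Gamma^*_\K$, completing the dichotomy.
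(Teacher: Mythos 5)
Your proof is correct, and it shares with the paper the outer skeleton forced by the statement (comparison of Theorem~\ref{theorem_1} with \eqref{gl_6}, plus the explicit matrices \eqref{gl_11} and \eqref{gl_12} exhibiting the nontrivial coset), but the decisive arithmetic step is handled by a genuinely different route. The paper settles the whole dichotomy by a one-line appeal to the theory of ambiguous ideals (\cite{Le}, 7.6 and 7.8): the condition $\Ical(L)^2=\oh_\K\det L$ exhibits $\Ical(L)$ as an ideal with principal square, and Lemmermeyer's results relating generators of such ideals, the norm of the fundamental unit, and the ramified primes yield exactly the stated case distinction. You instead organize the comparison around the sign character $\chi\colon\tfrac{1}{\sqrt{\det L}}L\mapsto\operatorname{sign}\bigl((\det L)'\bigr)$, which shows $\Gamma^*_\K$ is normal of index at most $2$ in $\Ncal_\K$ (a structural point the paper leaves implicit), and you convert nontriviality of the quotient into ``$-1$ is a norm from $\K$'' via $dd'=-(N\mathfrak a)^2$, deciding that question by Hilbert symbols and the Hasse norm theorem: obstructions can only occur at odd ramified primes, where $(-1,m)_p=\left(\tfrac{-1}{p}\right)$, the dyadic symbol being forced by the product formula. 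What the paper's citation buys is brevity and consistency with its ideal-theoretic framework; what your route buys is a self-contained local--global argument, modulo Hasse's theorem and the two-squares theorem (which you correctly invoke to guarantee $m=\alpha^2+\beta^2$ with $\alpha$ odd, so that \eqref{gl_12} is actually available in the second non-excluded case). The details check out: well-definedness of $\chi$ (two representations of the same matrix differ by some $c\in\K^\times$, and $(c^2)'=(c')^2>0$), the norm computation in the forward direction, the observation that a ramified prime $p\equiv 3\bmod 4$ forces $\varepsilon_0\varepsilon_0'=1$ so that the excluded case is exactly ``$-1$ is not a norm'', and the verification $\chi(M_0)=-1$ from $\varepsilon_0'=-1/\varepsilon_0<0$ respectively $u>0>u'$ are all sound.
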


We can quote Maaß \cite{Maa} or apply the results from \eqref{gl_6} and \eqref{gl_10} in order to obtain
\[
 [\Gamma^*_\K:\Gamma_\K] = 2^{\nu-1}, \quad \nu=\sharp\{p\;\text{prime}; p\mid d_\K\}.
\]
\begin{remark}\label{remark_1} 
a) The field $\widehat{\K}$ generated by the entries of the matrices in $\Gamma^*_\K$ is given by
\[
 \widehat{\K} = \Q(\sqrt{p};\; p\;\text{prime}, \; p\mid d_\K)
\]
according to \eqref{gl_6} and \eqref{gl_7}. Hence an analog of Theorem 4 in \cite {KRaW} also holds in this case:
\[
 \widehat{\K}\supseteq \Q \;\text{\emph{is unramified outside}}\; 2d_\K.
\]
b) If $\varepsilon \gg 0$ is a unit in $\oh_\K$, then \eqref{gl_6} yields
\[
 \frac{1}{\sqrt{2+\varepsilon + \varepsilon'}} \begin{pmatrix}
                                                \varepsilon +1 & 0 \\ 0 & \varepsilon'+1
                                               \end{pmatrix}
 \in \Gamma^*_\K.
\]
Thus the squarefree kernel $q$ of $2+\varepsilon + \varepsilon'$ always divides $d_\K$.
\end{remark}

\section{The Hilbert modular group as an orthogonal group}

If $T\in\Z^{2\times 2}$ is an even symmetric matrix  with $\det T< 0$ the associated half-space $\Hcal_T$ is defined to be
\[
 \Hcal_T: = \{z= x+iy\in\C^2;\; y^{tr} Ty> 0,\, y_1>0\}.
\]
Given $\widetilde{M}\in SO_0(T_N;\R)$ (cf. \eqref{gl_2}) we will always assume the form
\begin{gather*}\tag{13}\label{gl_13}
\widetilde{M} = \begin{pmatrix}
                        \widetilde{\alpha} & a^{tr} T & \widetilde{\beta} \\ b & K & c \\ \widetilde{\gamma}  & d^{tr}T & \widetilde{\delta}
                       \end{pmatrix}, \;\; \widetilde{\alpha},\widetilde{\beta},\widetilde{\gamma},\widetilde{\delta}\in\R.
\end{gather*}
It is well-known (cf. \cite{G4}, \cite{K1}) that $\widetilde{M}$ acts on $\Hcal_T$ via
\begin{gather*}\tag{14}\label{gl_14}
\begin{split}
 z\mapsto \widetilde{M}\langle z\rangle & := \bigl(\widetilde{M}\{z\}\bigr)^{-1}\bigl(-\tfrac{1}{2} z^{tr} Tz\cdot b + Kz + c\bigr),  \\
 \widetilde{M}\{z\} & := -\tfrac{1}{2}\widetilde{\gamma} z^{tr} Tz + d^{tr} Tz + \widetilde{\delta}.
 \end{split}
\end{gather*}
Note that $\Hcal^2$, where $\Hcal$ denotes the upper half-plane in $\C$, is the orthogonal half-space $\Hcal_P$, $P=\left(\begin{smallmatrix}
                                                                      0 & 1 \\ 1 & 0
                                                                     \end{smallmatrix}\right)$. 
Considering \cite{K1}, sect. 5, we obtain a surjective homomorphism of the groups
\begin{align*}
 \Omega: SL_2(\R) \times SL_2(\R) & \to SO_0(P_1;\R), \\
 (U,V) & \mapsto \begin{pmatrix}
                  \alpha FVF & \beta FV \\ \gamma VF & \delta V
                 \end{pmatrix}, \quad
 U = \begin{pmatrix}
      \alpha & \beta \\ \gamma & \delta
     \end{pmatrix},
 F = \begin{pmatrix}
      -1& 0 \\ 0 & 1
     \end{pmatrix},    
\end{align*}
with kernel $\{\pm(I,I)\}$ satisfying
\begin{gather*}\tag{15}\label{gl_15}
 \Omega(U,V)\left\langle\binom{\tau_1}{\tau_2}\right\rangle = \binom{U\langle\tau_1\rangle}{V\langle\tau_2\rangle}.
\end{gather*}

Thus we have for $M\in \Sigma_\K$
\begin{gather*}\tag{16}\label{gl_16}
 \Omega(M,M') = \begin{pmatrix}
       \alpha \alpha' & (-\alpha' \beta  & -\alpha \beta')P & -\beta \beta' \\
       -\alpha \gamma' & \alpha \delta' & \beta \gamma' & \beta \delta' \\
       -\alpha' \gamma & \beta' \gamma & \alpha' \delta & \beta' \delta \\
       -\gamma \gamma' & (\gamma' \delta & \gamma \delta')P & \delta \delta'
      \end{pmatrix}
\in SO_0(P_1;\R).
\end{gather*}
Note that $\Omega(M,M') \neq -I$ for all these $M$. 
Now choose a basis $B=(u,v)$ of $\K$ over $\Q$ and consider the base change
\[
 \binom{\tau_1}{\tau_2} = Gz, \quad G= \begin{pmatrix}
                                                       u & v \\ u' & v'
                                                      \end{pmatrix}, \quad
 \widehat{G} = \begin{pmatrix}
                1 & 0 & 0 \\ 0 & G & 0 \\ 0 & 0 & 1
               \end{pmatrix}.
\]
We set
\begin{gather*}\tag{17}\label{gl_17}
\begin{split}
  \widetilde{M} = \widehat{G}^{-1} \Omega(M,M') \widehat{G} & \in SO_0(S_1;\R), \quad 
 S = G^{tr} PG = \begin{pmatrix}
      2uu' & uv' + u'v \\ uv' + u'v & 2vv'
     \end{pmatrix},  \\
  & \widetilde{M}\{z\} = (\gamma\tau_1 + \delta) (\gamma'\tau_2 + \delta').
\end{split}
\end{gather*}
If we define
\[
 \varphi_B : \K\to \Q^2,\;\; \alpha u + \beta v \mapsto \binom{\alpha}{\beta},
\]
a straightforward calculation yields that the description of $\widetilde{M}$ in \eqref{gl_13} is given by
\begin{gather*}\tag{18}\label{gl_18}
 \begin{cases}
  & \widetilde{\alpha} = \alpha\alpha', \;\widetilde{\beta} = -\beta\beta',\; \widetilde{\gamma} = -\gamma\gamma',\; \widetilde{\delta} = \delta\delta',  \\
  & a = \varphi_B(-\alpha'\beta),\;b = \varphi_B(-\alpha\gamma'),\;c = \varphi_B(\beta\delta'),\; d = \varphi_B(\gamma'\delta),  \\
  & K \;\text{is the matrix that represents the endomorphism } \\
  & f_M: \K\to \K, \; w\mapsto \alpha \delta' w + \beta\gamma' w', \\
  & \text{with respect to the basis}\; B. \\
 \end{cases}
\end{gather*}
Recalling the definition of the discriminant kernel from \eqref{gl_1}, we obtain 
\begin{theorem}\label{theorem_2} 
 Let $\K$ be a real-quadratic field and let $B=(u,v)$ be a $\Z$-basis of $\oh_\K$ with Gram matrix $S$ from \eqref{gl_17}. Then the mappings
 \begin{align*}
  \widetilde{\phi}_B : \Sigma_\K/\{\pm I\} & \to SO_0(S_1;\Q)/\{\pm I\}, \; \pm M \mapsto \pm \widetilde{M}, \\
  \widetilde{\phi}_B : \Gamma^*_\K/\{\pm I\} & \to SO_0(S_1;\Z)/\{\pm I\}, \; \pm M\mapsto \pm \widetilde{M},
 \end{align*}
where $\widetilde{M}$ is defined by \eqref{gl_18}, are isomorphisms of the groups. Moreover the mapping 
\[
 \phi_B: \Gamma_\K/\{\pm I\} \to \Dcal(S_1;\Z),\; \pm M\mapsto \widetilde{M},
\]
is an isomorphism of the groups.
\end{theorem}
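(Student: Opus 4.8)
The plan is to realise $\widetilde\phi_B$ as the restriction of a composite isomorphism and then to pin down its image. Writing $\iota\colon \pm M\mapsto\pm(M,M')$ for the embedding $\Sigma_\K/\{\pm I\}\hookrightarrow(SL_2(\R)\times SL_2(\R))/\{\pm(I,I)\}$, the map $\widetilde\phi_B$ is obtained by applying $X\mapsto\widehat G^{-1}X\widehat G$ to $\Omega\circ\iota$. Since $\Omega$ is a surjective homomorphism with kernel $\{\pm(I,I)\}$ and $S_1=\widehat G^{tr}P_1\widehat G$, conjugation by $\widehat G$ is an isomorphism $SO_0(P_1;\R)\to SO_0(S_1;\R)$, so the composite is an isomorphism onto $SO_0(S_1;\R)/\{\pm I\}$. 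Hence $\widetilde\phi_B$ is automatically injective, and the entire content is the identification of its image with the arithmetic groups in the statement. For multiplicativity I would combine that $\Omega$ is a homomorphism, that $(M_1M_2)'=\pm M_1'M_2'$, and that the block formula gives $\Omega(U,-V)=-\Omega(U,V)$; together these yield $\widetilde{M_1M_2}=\pm\widetilde{M_1}\,\widetilde{M_2}$, i.e.\ multiplicativity modulo $\{\pm I\}$. On $\Gamma_\K=SL_2(\oh_\K)$ the entries lie in $\K$, so $'$ is the genuine automorphism and the sign is $+$; this is why $\phi_B$ needs no $\pm$ on the target. Injectivity follows from $\ker\Omega=\{\pm(I,I)\}$ and the remark $\Omega(M,M')\ne -I$: if $\widetilde M=\pm I$ then $\Omega(M,M')=I$, hence $M=\pm I$. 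Finally, since by \eqref{gl_18} every entry of $\widetilde M$ is a product of two entries of $M$, one has $\widetilde{-M}=\widetilde M$, so $M\mapsto\widetilde M$ is constant on $\{\pm M\}$ and selects a single sign.

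That the image lands in the correct arithmetic group is read off \eqref{gl_18}. For $M\in\Sigma_\K$ every product of an entry of $M$ with an entry of $M'$ lies in $\K$, so $\widetilde\alpha,\dots,\widetilde\delta\in\Q$, $a,b,c,d\in\Q^2$ and $K\in\Q^{2\times2}$, giving $\widetilde M\in SO_0(S_1;\Q)$. For $M=\tfrac1{\sqrt\ell}L\in\Gamma^*_\K$ I would use the ideal description \eqref{gl_6}, \eqref{gl_10}: the entries of $L$ generate the ambiguous ideal $\Acal_\ell$ with $\Acal_\ell^2=\ell\oh_\K$, so any product of two entries of $L$ lies in $\ell\oh_\K$ and any norm lies in $\ell\Z$; dividing by $\ell$ shows $\widetilde M\in\Z^{4\times4}$, i.e.\ $\widetilde M\in SO_0(S_1;\Z)$ (the case $\Gamma_\K$ being $\ell=1$).

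The essential point is surjectivity. Given $X\in SO_0(S_1;\Q)$, put $Y:=\widehat G X\widehat G^{-1}\in SO_0(P_1;\R)$; its entries lie in $\K$ because $\widehat G$ does and $X$ is rational. Write $Y=\Omega(U,V)$ with $(U,V)$ unique up to sign. Reading $U,V$ projectively off the four $2\times2$ blocks of $Y$ and normalising $\det U=\det V=1$, one finds that all quotients of the entries of $U$ (and likewise of $V$) are quotients of entries of $Y$, hence lie in $\K$, and that $\delta^2\in\K$ (divide $\det U=1$ by $\delta^2$); thus $U,V$ have entries in a field $\K(\eta)$ with $\eta^2\in\K$ — exactly the $\K(\sqrt\ell)$ phenomenon. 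Extending $'$ to $\K(\eta)$ by $\eta'=\eta$, the key computation is that conjugation by the middle transposition $\widehat\sigma=\operatorname{diag}(1,\sigma,1)$, $\sigma=\left(\begin{smallmatrix}0&1\\1&0\end{smallmatrix}\right)$, swaps the two factors up to sign, $\widehat\sigma\,\Omega(U,V)\,\widehat\sigma^{-1}=\pm\Omega(V,U)$; since $\widehat G'=\widehat\sigma\widehat G$ and $X'=X$ give $\Omega(U',V')=Y'=\widehat\sigma Y\widehat\sigma^{-1}=\pm\Omega(V,U)$, the kernel of $\Omega$ forces $V=\pm U'$. Thus $Y=\Omega(U,U')$, and a short denominator-clearing argument together with the determinant condition places $U\in\Sigma_\K$, proving surjectivity over $\Q$. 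For the integral claim I start from this $M=\tfrac1{\sqrt\ell}L$ and translate integrality of $X=\widetilde M$, via \eqref{gl_18}, into $\det L\gg0$ (orientation, from $X\in SO_0$) and $\Ical(L)^2=\oh_\K\det L$, which by \eqref{gl_6} gives $M\in\Gamma^*_\K$. I expect the genuine difficulties to be concentrated here: bookkeeping the signs and the field of definition $\K(\eta)$, and reconstructing the ambiguous-ideal condition from integrality alone.

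For the last assertion I would use the isomorphism $\Gamma^*_\K/\{\pm I\}\cong SO_0(S_1;\Z)/\{\pm I\}$ just obtained and identify which classes meet $\Dcal(S_1;\Z)$. Computing $\widetilde M-I$ from \eqref{gl_18} and testing $(\widetilde M-I)S_1^{-1}\in\Z^{4\times4}$ — equivalently, trivial action on the discriminant group $S_1^{-1}\Z^4/\Z^4$ of order $d_\K$ — the $a,b,c,d$-part is automatically integral, and the remaining condition $(K-I)S^{-1}\in\Z^{2\times2}$ holds exactly when $M$ has entries in $\oh_\K$, i.e.\ $\ell=1$ and $M\in\Gamma_\K$, while the Atkin--Lehner representatives $V_\ell$ ($\ell\mid d_\K$, $\ell>1$) act non-trivially and hence lie outside $\Dcal$. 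The sign is pinned down by $-I\notin\Dcal(S_1;\Z)$, which holds since $2S^{-1}\notin\Z^{2\times2}$ (the discriminant group has exponent $>2$ for $m>1$); consequently exactly one representative of each class meeting $\Dcal$ lies in $\Dcal$, and it is the one coming from $\Gamma_\K$. Together with the genuine multiplicativity of $M\mapsto\widetilde M$ on $\Gamma_\K$ and $\widetilde{-M}=\widetilde M$, this shows that $\phi_B\colon\Gamma_\K/\{\pm I\}\to\Dcal(S_1;\Z)$ is an isomorphism.
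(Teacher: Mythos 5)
Your formal skeleton is fine and your plan is genuinely different from the paper's: you want to reconstruct a preimage of an arbitrary $X\in SO_0(S_1;\Q)$ directly, whereas the paper exhibits preimages for an explicit generating system of $SO_0(S_1;\Q)$ taken from \cite{K1}, and settles the integral case with no computation at all, by observing that $\widetilde{\phi}_B^{-1}\bigl(SO_0(S_1;\Z)/\{\pm I\}\bigr)$ is a discontinuous subgroup of $PSL_2(\R)$ containing $\Gamma^*_\K/\{\pm I\}$ and invoking Maaß's theorem that $\Gamma^*_\K$ is a \emph{maximal} discontinuous group. Your reconstruction, however, contains a step that fails as written: you extend $'$ to $\K(\eta)$ by $\eta'=\eta$ knowing only $\eta^2\in\K$. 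For $a+b\eta\mapsto a'+b'\eta$ to be a ring homomorphism one needs $(\eta^2)'=\eta^2$, i.e.\ $\eta^2\in\Q$; your $\eta$ is the entry $\delta$ of $U$, whose square is typically irrational (already for $M=\left(\begin{smallmatrix}0&-1\\1&\omega_\K\end{smallmatrix}\right)\in\Gamma_\K$ one has $\delta^2=\omega_\K^2\notin\Q$; and if $\eta\in\K\setminus\Q$ the prescription even contradicts $'$ on $\K$ itself). Hence the identity $Y'=\Omega(U',V')$, and with it the key conclusion $V=\pm U'$, is unjustified. The gap is fixable, but by more than ``denominator clearing'': write $U=\eta^{-1}A$, $V=\eta B$ with $A,B\in\K^{2\times2}$, so that $Y=\Omega(A,B)$ by bilinearity; from $Y'=\widehat{\sigma}Y\widehat{\sigma}^{-1}=\Omega(B,A)$ and the fact that the entries of $\Omega(A,B)$ determine $(A,B)$ up to $(\lambda A,\lambda^{-1}B)$, deduce $A'=\mu B$ with $\mu'=\mu\in\Q^{*}$; this forces $(\det A)(\det A)'=\mu^2$, and only then does Hilbert 90 (equivalently, parametrizing the conic $x^2-my^2=\mu^2$) give $\det A\in\Q_{>0}\cdot(\K^{*})^2$, which is exactly what permits writing $U=\tfrac{1}{\sqrt{\ell}}L$ with $\ell\in\N$, $L\in\oh^{2\times2}_\K$ and then checking $V=\pm U'$ in the paper's sense. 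This norm-theoretic ingredient is absent from your sketch.

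The integral statement and the discriminant kernel are likewise asserted exactly where the work lies. Integrality of $\widetilde{M}$ controls, via \eqref{gl_18}, the products $\alpha\alpha',\beta\beta',\gamma\gamma',\delta\delta'$ and $\alpha'\beta,\alpha\gamma',\beta\delta',\gamma'\delta$, but \emph{not} $\alpha\delta'$ and $\beta\gamma'$ individually --- these enter only through the endomorphism $f_M$. To obtain $\Ical(L)\Ical(L)'\subseteq\oh_\K\det L$ one needs an extra argument, e.g.\ $\alpha\delta'+\beta\gamma'=f_M(1)\in\oh_\K$ together with $(\alpha\delta')(\beta\gamma')=(\alpha\gamma')(\beta\delta')\in\oh_\K$ and the integral closedness of $\oh_\K$; after that one must still upgrade the containment to the equality $\Ical(L)^2=\oh_\K\det L$ required by \eqref{gl_6}, using $\det L\in\Ical(L)^2$ and a norm count. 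Similarly, in the last part the two claims that carry the proof --- that $M\in\Gamma_\K$ implies $K-I\in\Z^{2\times2}S$, and that the classes of the $V_\ell$, $\ell\neq1,m$, miss $\Dcal(S_1;\Z)$ --- are stated without proof; the paper establishes them by showing $F_M\bigl(\sqrt{d_\K}\,\oh_\K\bigr)\subseteq d_\K\oh_\K$ (using $\alpha\delta-\beta\gamma=1$) and $F_{V_\ell}\bigl(\sqrt{d_\K}\bigr)\notin d_\K\oh_\K$. You flag these points yourself as ``the genuine difficulties'', which is accurate, but it means that as it stands the heart of the theorem --- surjectivity onto the integral group and the identification of the discriminant kernel --- is not established; either supply the computations above or replace them, as the paper does, by the appeal to Maaß's maximality theorem.
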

\begin{proof}
 We get $\widetilde{\phi}_B\bigl(\Sigma_\K/\{\pm I\}\bigr) \subseteq SO_0(S_1;\Q)/\{\pm I\}$ and  $\widetilde{\phi}_B\bigl(\Gamma^*_\K/\{\pm I\}\bigr) \subseteq SO_0(S_1;\Z)/\{\pm I\}$ directly from \eqref{gl_18}. As $\widetilde{\phi}_B$ maps the group actions in \eqref{gl_14} and \eqref{gl_15} onto each other, $\widetilde{\phi}_B$ is an injective homomorphism of the groups. 
 In the notation of \cite{K1} we easily see that $SO_0(S_1;\Q)$ is generated by the matrices
 \begin{align*}
  & T_\lambda, \widetilde{T}_\lambda, \, \lambda \in\Q^2, \; \diag (\ell,1,1,1/\ell),\, \ell\in \N,   \\
  &  R_K, \, K = \begin{pmatrix}
               a & bm \\ b & a
              \end{pmatrix},
  \; w = a+b\sqrt{m} \in\K,\, w>0,\, ww' = 1.
 \end{align*}
 They appear as images of the matrices 
 \[
  \pm\begin{pmatrix}
      1 & \lambda \\ 0 & 1
     \end{pmatrix}, \;
  \pm\begin{pmatrix}
      1 & 0 \\ \lambda & 1
     \end{pmatrix}, \, \lambda \in \K,\;   
  \pm\frac{1}{\sqrt{\ell}}\begin{pmatrix}
      \ell & 0 \\ 0 & 1
     \end{pmatrix}, \, \ell \in \N,\;
   \frac{1}{\sqrt{2+2a}}\begin{pmatrix}
      w+1 & 0 \\ 0 & w'+1
     \end{pmatrix}   
 \]
 in $\Sigma_\K$. Hence $\widetilde{\phi}_B$ is an isomorphism for the rationals. 
 As $\widetilde{\phi}_B^{-1}\bigl(SO_0(S_1;\Z)/\{\pm I\}\bigr)$ is a discontinuous subgroup of $PSL_2(\R)$ containing $\Gamma^*_\K/\{\pm I\}$, we obtain equality from the result of Maaß \cite{Maa}. 
 
 In view of $-I\not\in \phi_B\bigl(\Gamma_\K/\{\pm I\}\bigr)$ we conclude that $\phi_B$ is an injective homomophism of the groups with $\phi_B\bigl(\Gamma_\K/\{\pm I\}\bigr)\subseteq SO_0(S_1;\Z)$. We get  
 \begin{gather*}\tag{19}\label{gl_19}
  (K-I) \in \Z^{2\times 2} S
 \end{gather*}
 if and only if $F_M = f_M -id$ satisfies
 \[
  \left(\phi_B (F_M(u)), \phi_B(F_M(v))\right) \in \Z^{2\times 2} S.
 \]
 If $X\in \Z^{2\times 2}$ satisfies 
 \[
  \bigl(\sqrt{d_\K} u, \sqrt{d_\K} v\bigr) = (u,v)X
 \]
 the latter condition becomes equivalent to 
 \[
  \bigl(\phi_B(F_M(\sqrt{d_\K}u)), \phi_B(F_M(\sqrt{d_\K}v))\bigr) = \bigl(\phi_B(F_M(u)), \phi_B(F_M(v))\bigr)X \subseteq \Z^{2\times 2} SX = d_\K \Z^{2\times 2}
 \]
 as 
 \[
  SX = G^{tr} PGX = G^{tr}\sqrt{d_\K} JG = \pm d_\K J, \; J=\begin{pmatrix}
                                                             0 & -1 \\ 1 & 0
                                                            \end{pmatrix}.
 \]
 Thus \eqref{gl_19} holds if and only if
\[
   F_M\bigl(\sqrt{d_\K} \oh_\K\bigr) \subseteq d_\K \oh_\K.
\]
 This is easily demonstrated for $M\in\Gamma_\K$ using $\alpha\delta -\beta \gamma = 1$. 
 If $\ell$ is a squarefree divisor of $d_\K$, $\ell\neq 1,m$ we verify 
 \[
  F_{V_\ell}\bigl(\sqrt{d_\K}\bigr) \notin d_\K \oh_\K.
 \]
 Thus $\phi_B$ becomes an isomorphism, too.
\end{proof}

Now we apply the results to congruence subgroups.

\begin{corollary}\label{corollary_2} 
 Let $N\in\N$ and $B=(u,v)$ be a $\Z$-basis of $\oh_\K$ with Gram matrix $S$ from \eqref{gl_17}.\\[1ex]
 a) $\phi_B $ maps
 \[
  \left\{ M=\begin{pmatrix}
             \alpha & \beta \\ \gamma & \delta
            \end{pmatrix} 
  \in \Gamma_\K;\; \alpha\alpha'\equiv \delta\delta' \equiv 1 \bmod{N},\,\gamma \in N\oh_\K\right\}\Big/\{\pm I\}
 \]
onto
\[
 F_N \Dcal(S_N;\Z) F_N^{-1}, \quad F_N = \diag(1,1,1,N).
\]
b) $\phi_B$ maps the principal congruence subgroup
\[
\{M\in \Gamma_\K;\; M\equiv \varepsilon I\bmod{N\oh_\K},\, \varepsilon \in \Z,\, \varepsilon^2\equiv 1\bmod{N}\}\big/\{\pm I\}
\]
onto
\[
 \Dcal(NS_1;\Z).
\]
\end{corollary}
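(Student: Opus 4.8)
The plan is to translate membership in each target discriminant kernel into explicit congruences on the entries $\alpha,\beta,\gamma,\delta$ of $M\in\Gamma_\K$, using the formulas \eqref{gl_18} for $\widetilde{M}$ together with Theorem~\ref{theorem_2}, which already realises $\phi_B$ as an isomorphism of $\Gamma_\K/\{\pm I\}$ onto $\Dcal(S_1;\Z)$. For an even matrix $R$ replacing $S_1$, the condition $U-I\in\Z^{4\times4}R$ defining $\Dcal(R;\Z)$ is equivalent to $(U-I)R^{-1}\in\Z^{4\times4}$, which I would read off block by block in the $(1,2,1)$-decomposition of \eqref{gl_13}. Writing $R^{-1}$ in this block form, the nine blocks give four corner congruences on $\alpha\alpha',\beta\beta',\gamma\gamma',\delta\delta'$ (via $\widetilde\alpha,\widetilde\beta,\widetilde\gamma,\widetilde\delta$), four edge conditions on the products $\alpha'\beta,\alpha\gamma',\beta\delta',\gamma'\delta$ (via $a,b,c,d$), and one middle condition on $K-I$. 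This is exactly the bookkeeping of the proof of Theorem~\ref{theorem_2}, now carried with an extra factor $N$.

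For part~a) I first record that $S_N=F_N^{tr}S_1F_N$, so $F_N\,\Dcal(S_N;\Z)\,F_N^{-1}\subseteq SO_0(S_1;\R)$, and I set $Y:=F_N^{-1}\widetilde{M}F_N$; conjugation by $F_N=\diag(1,1,1,N)$ fixes the upper-left $3\times3$ block, multiplies the last column by $N$ and the last row by $1/N$. Using $R=S_N$ (whose inverse carries $1/N$ in the corners and $S^{-1}$ in the middle), I expect $Y\in\Dcal(S_N;\Z)$ to be equivalent to $\alpha\alpha'\equiv\delta\delta'\equiv1\bmod N$, to $\alpha\gamma',\gamma'\delta\in N\oh_\K$, and to $N^2\mid\gamma\gamma'$, while every $\beta$-condition becomes vacuous because of the scaling---matching the asymmetry of the displayed group. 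The forward inclusion is immediate, and for the converse I would use $\alpha\delta-\beta\gamma=1$ to write $\gamma'=\alpha\delta\gamma'-\beta(\gamma\gamma')$ and conclude $\gamma\in N\oh_\K$ from $\alpha\gamma'\in N\oh_\K$ and $\gamma\gamma'\in N\Z$. Surjectivity onto $F_N\Dcal(S_N;\Z)F_N^{-1}$ is then automatic, since $\phi_B$ already exhausts $\Dcal(S_1;\Z)$.

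For part~b) one has $(NS_1)^{-1}=\tfrac1N S_1^{-1}$, so now all nine blocks acquire a factor $N$: the corners give $\alpha\alpha'\equiv\delta\delta'\equiv1\bmod N$ and $N\mid\beta\beta',\gamma\gamma'$, the edges give all four of $\alpha'\beta,\alpha\gamma',\beta\delta',\gamma'\delta\in N\oh_\K$, and the middle gives $K-I\in N\Z^{2\times2}S$. From the edge conditions and the determinant relation I would deduce $\beta,\gamma\in N\oh_\K$ exactly as in part~a), whence $\alpha\delta\equiv1\bmod N^2\oh_\K$. It then remains to extract from the middle block the individual behaviour of $\alpha$ and $\delta$, and for this I invoke the rescaled reformulation from the proof of Theorem~\ref{theorem_2}: $K-I\in N\Z^{2\times2}S$ is equivalent to $F_M(\sqrt{d_\K}\,\oh_\K)\subseteq Nd_\K\oh_\K$.

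This middle step is the crux and the point I expect to cause the main difficulty. Writing $\alpha\delta'-1=-\alpha(\delta-\delta')+\beta\gamma$ with $\delta-\delta'=t\sqrt{d_\K}$, $t\in\Z$, one finds $F_M(\sqrt{d_\K}\eta)=\sqrt{d_\K}\bigl(-\alpha t\sqrt{d_\K}\,\eta+\beta(\gamma\eta-\gamma'\eta')\bigr)$. Since $\gamma\eta-\gamma'\eta'$ is a difference of conjugates it lies in $\sqrt{d_\K}\,\oh_\K$, and because $\beta\in N\oh_\K$ the second term falls into $N\sqrt{d_\K}\oh_\K$; the condition therefore collapses to $\alpha t\sqrt{d_\K}\,\oh_\K\subseteq N\sqrt{d_\K}\oh_\K$, and as $\alpha$ is a unit modulo $N\oh_\K$ this forces $N\mid t$, i.e. $\delta\equiv s_0\bmod N\oh_\K$ with $s_0\in\Z$. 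The symmetric computation $\alpha\delta'-1=\delta'(\alpha-\alpha')+\beta'\gamma'$ gives $\alpha\equiv r_0\bmod N\oh_\K$ with $r_0\in\Z$. Finally $r_0^2\equiv s_0^2\equiv r_0s_0\equiv1\bmod N$ (from the norm congruences and $\alpha\delta\equiv1$) yields $r_0\equiv s_0=:\varepsilon$ with $\varepsilon^2\equiv1\bmod N$, so $M\equiv\varepsilon I\bmod N\oh_\K$. The reverse inclusion is the direct verification that such $M$ satisfy all nine conditions---again using the above cancellation for the middle one---and ontoness follows from Theorem~\ref{theorem_2} as in part~a).
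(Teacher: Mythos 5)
Your proposal is correct and takes essentially the same route as the paper's (very terse) proof: both translate the discriminant-kernel conditions block by block through \eqref{gl_18} and Theorem \ref{theorem_2}, use the determinant relation $\alpha\delta-\beta\gamma=1$ to upgrade the edge conditions to $\gamma\in N\oh_\K$ (resp. $\beta,\gamma\in N\oh_\K$), and settle the middle block by reducing $f_M$ to multiplication by $\alpha\delta'$ via $\alpha\delta\equiv 1\bmod{N\oh_\K}$. Your computation with $F_M\bigl(\sqrt{d_\K}\,\oh_\K\bigr)\subseteq Nd_\K\oh_\K$ and the identity $\alpha\delta'-1=-\alpha(\delta-\delta')+\beta\gamma$ is precisely the detailed form of the paper's hint ``considering the representing matrix of $w\mapsto\alpha\delta'w$'', so no genuinely different idea is involved.
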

\begin{proof}
 a) Apply Theorem \ref{theorem_2} and \eqref{gl_18}.  \\[1ex]
 b) Given $M$ in the principal congruence subgroup, then $\phi_B(\pm M) \in \Dcal(NS_1;\Z)$ is a consequence of \eqref{gl_18} and Theorem \ref{theorem_2}. On the other hand $\pm M\in \phi_B^{-1}\left(\Dcal(NS_1;\Z)\right)$ implies $\beta,\gamma\in N\oh_\K$ due to \eqref{gl_18}. Considering the representing matrix of $w\mapsto \alpha \delta' w$ and using $\alpha \delta \equiv 1 \bmod{N\oh_\K}$ we obtain
 \[
  M\equiv \varepsilon I\bmod{N\oh_\K},\; \varepsilon \in \Z,\quad \varepsilon^2\equiv 1 \bmod{N}.
 \]
 \vspace{-8ex}\\
\end{proof}

We add a Remark

\begin{remark}\label{Remark_2} 
 $\oh_\K$ with the symmetric bilinear form $(\alpha, \beta)\mapsto \alpha \beta' + \alpha'\beta$ is a maximal even lattice. Thus Theorem \ref{theorem_2} corresponds to the results on $SO(2,n)$, $n\geq 3$ in \cite{KSch}.
\end{remark}

\section{The general Hilbert modular group}

Given an integral ideal $0\neq \Ical \subseteq \oh_\K$ the \emph{(general) Hilbert modular group with respect to} $\Ical$ is defined by
\[
  \Gamma_\K(\Ical):=\left\{\begin{pmatrix}
                           \alpha & \beta \\ \gamma & \delta
                          \end{pmatrix}
 \in SL_2(\K);\;\alpha,\delta\in\oh_\K,\, \beta\in \Ical,\, \gamma\in\Ical^{-1}\right\}.
\]
 Given a basis $B$ of $\K$ then Theorem \ref{theorem_2} and \eqref{gl_18} immediately imply
\[
  \phi_B\bigl(\Gamma_\K(n\Ical)/\{\pm I\}\bigr) = H_n \phi_B \bigl(\Gamma_\K(\Ical)/\{\pm I\}\bigr) H^{-1}_n,
\]
whenever $n\in\N$ and $H_n = \phi_B\left(\pm \frac{1}{\sqrt{n}}\left(\begin{smallmatrix}
       n & 0 \\ 0 & 1
      \end{smallmatrix}\right)\right)=\diag(n,1,1,1/n)$. Hence we may assume that $\Ical$ is a \emph{primitive} ideal, i.e.
\[
 \tfrac{1}{n} \Ical \subseteq \oh_\K, \; n\in\N \;\Rightarrow\; n=1.
\]
In this case $\Ical$ contains a $\Z$-basis
 \begin{gather*}\tag{20}\label{gl_20}
  N,t+\omega_\K,\; N\in \N,\; t\in\Z, \quad N\mid(t+\omega_\K)(t+\omega'_\K),
 \end{gather*}
where $N$ is the reduced norm of $\Ical$ (cf. \cite{Le}, Proposition 4.2). It follows from \cite{Ha}, \cite{He} or \cite{Maa} that the so-called \emph{Maaß-Hurwitz extension} $\Gamma^*_\K(\Ical)$ is a maximal discontinuous subgroup of $SL_2(\R)$ containing $\Gamma_\K (\Ical )$ with index 
\[
 2^{\nu-1},\;\; \nu= \sharp\{p\;\text{prime};\; p\mid d_\K\}.
\]
It is given by
\begin{align*}
 \Gamma^*_\K (\Ical) & = \biggl\{\frac{1}{\sqrt{\det L}} L;\;L\in\begin{pmatrix}
                                                                 \oh_\K & \Ical \\ \Ical^{-1} & \oh_\K
                                                                \end{pmatrix}, \,
 \det L\gg 0,\, \Ical(L)^2 = \oh_\K\det L\biggr\} \\[0.5ex]
 & = \bigcup_{\ell\in\N\,\text{squarefree},\,\ell|d_\K} V_{\ell} \Gamma_\K(\Ical).
\end{align*}
One may choose
\[
 V_\ell = \frac{1}{\sqrt{\ell}} \begin{pmatrix}
                                 \nu\ell & \mu u N \\ u' & \ell
                                \end{pmatrix}, \;\;
 u=\frac{\ell}{\gcd(\ell,N)} + m+\sqrt{m},\; \nu,\mu \in\Z,\; \nu\ell-\mu N uu'/\ell = 1.
\]
Just as in sect. \ref{sect_2} we have
\[
 V_\ell \Gamma_\K(\Ical) = \Gamma_\K(\Ical) V_\ell = \frac{1}{\sqrt{\ell}} \begin{pmatrix}
                                                                                   \Acal_\ell& \Acal_\ell \Ical \\ \Acal_\ell \Ical^{-1} & \Acal_\ell
                                                                                  \end{pmatrix}
 \cap SL_2 (\R) 
\]
as well as 
\[
 V_\ell \Gamma_\K(\Ical) = \Gamma_\K (\Ical) \;\Longleftrightarrow \; \ell=1\;\,\text{or}\;\, \ell = m.
\]
\begin{theorem}\label{theorem_3} 
 Let $\K$ be a real-quadratic field and let $\Ical\subseteq \oh_\K$ be a primitive ideal with reduced norm $N$. Assume that $B = (u,v)$ is a $\Z$-basis of $\Ical$ with Gram-matrix $S$ from \eqref{gl_17} and let $T = \frac{1}{N} S$. Then the mappings
 \begin{align*}
  \psi_B: \Gamma_\K(\Ical)\big /\{\pm I\} & \to \Dcal(T_1;\Z), \; \pm M\mapsto H^{-1}_N \widetilde{M} H_N,  \\
  \widetilde{\psi}_B: \Gamma^*_\K(\Ical)\big /\{\pm I\} & \to SO_0(T_1;\Z)\big /\{\pm I\}, \; \pm M\mapsto \pm H^{-1}_N \widetilde{M} H_N,
 \end{align*}
 where $H_N=\diag(N,1,1,1)$ and $\widetilde{M}$ is given by \eqref{gl_18}, are isomorphisms of the groups.
\end{theorem}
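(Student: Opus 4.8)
The plan is to reduce Theorem~\ref{theorem_3} to the already-established Theorem~\ref{theorem_2} by a base-change argument, exploiting the relation $T = \frac{1}{N}S$ and the conjugation by $H_N = \diag(N,1,1,1)$. First I would observe that although $B=(u,v)$ is now a $\Z$-basis of the primitive ideal $\Ical$ rather than of $\oh_\K$, the formulas \eqref{gl_17} and \eqref{gl_18} defining $\widetilde{M}$ from $M\in\Sigma_\K$ depend only on the entries of $M$ and on the chosen basis $B$, so they remain literally valid. The Gram matrix $S = G^{tr}PG$ now has $\det S = -d_\K N^2$ (since $\Ical$ has reduced norm $N$), so $T = \frac{1}{N}S$ is again an \emph{even} integral matrix with $\det T = -d_\K$, which is exactly the kind of matrix to which the orthogonal-group machinery applies. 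The conjugation $\widetilde{M}\mapsto H_N^{-1}\widetilde{M}H_N$ is precisely the linear-algebra bookkeeping that converts the $S$-orthogonal picture into the $T$-orthogonal picture; I would verify using \eqref{gl_13} and \eqref{gl_18} that this conjugation sends $SO_0(S_1;\Q)$ to $SO_0(T_1;\Q)$ and carries integrality of the middle block $K$ across the rescaling.

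Next I would establish the two isomorphisms in parallel with the proof of Theorem~\ref{theorem_2}. For $\widetilde{\psi}_B$ on $\Gamma^*_\K(\Ical)$: the map is an injective homomorphism because it is $\widetilde{\phi}_B$ followed by the fixed conjugation, and because $\Omega(M,M')\neq -I$ guarantees $\pm$-injectivity as before. Surjectivity onto $SO_0(T_1;\Z)/\{\pm I\}$ follows from the same two-step strategy: one checks that the rational orthogonal group $SO_0(T_1;\Q)$ is generated by the analogous translations $T_\lambda,\widetilde{T}_\lambda$, the diagonal matrices $\diag(\ell,1,1,1/\ell)$, and the rotation matrices $R_K$, all of which are images of explicit elements of $\Sigma_\K$; then one appeals to the maximality result of Maaß \cite{Maa} for $\Gamma^*_\K(\Ical)$ to conclude that the preimage of $SO_0(T_1;\Z)/\{\pm I\}$, being a discontinuous subgroup containing $\Gamma^*_\K(\Ical)/\{\pm I\}$, must equal it.

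For the discriminant-kernel statement $\psi_B$ on $\Gamma_\K(\Ical)$, I would run the criterion \eqref{gl_19} adapted to $T$: after the conjugation by $H_N$, the condition $H_N^{-1}\widetilde{M}H_N \in I + \Z^{4\times 4}T$ translates into a congruence on $F_M = f_M - \mathrm{id}$ of the form $F_M(\sqrt{d_\K}\,\Ical)\subseteq d_\K\Ical$, in complete analogy with the $\oh_\K$-case, using $SX = \pm d_\K J$ with $X$ now the integral matrix expressing $(\sqrt{d_\K}u,\sqrt{d_\K}v)$ in the basis $B$ of $\Ical$. For $M\in\Gamma_\K(\Ical)$ this is verified directly from $\alpha\delta-\beta\gamma=1$ together with $\alpha,\delta\in\oh_\K$, $\beta\in\Ical$, $\gamma\in\Ical^{-1}$, while for the Atkin-Lehner matrices $V_\ell$ with $\ell\neq 1,m$ one checks $F_{V_\ell}(\sqrt{d_\K})\notin d_\K\Ical$, so that $\psi_B$ lands exactly in the discriminant kernel and is onto it.

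The main obstacle I expect is bookkeeping rather than conceptual: tracking how the ideal $\Ical$ and its reduced norm $N$ enter the Gram matrix $S$ and the integrality conditions, and verifying that the single rescaling $T=\frac{1}{N}S$ together with the conjugation by $H_N=\diag(N,1,1,1)$ simultaneously fixes up integrality of the \emph{outer} entries $\widetilde\beta,\widetilde\gamma$ and the vectors $a,b,c,d$ in \eqref{gl_13} \emph{and} the middle block $K$. In particular, one must confirm that $T$ is genuinely even and integral (not merely rational) using the primitivity of $\Ical$ and the basis shape \eqref{gl_20}, since this is what makes $\Dcal(T_1;\Z)$ and $SO_0(T_1;\Z)$ well-defined; this evenness check is the step where the hypotheses on $\Ical$ are really used.
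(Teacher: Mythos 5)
Your plan follows the paper's own proof almost line for line: reduce to the argument of Theorem~\ref{theorem_2}, transport the discriminant-kernel criterion \eqref{gl_19} to the condition $F_M\bigl(\sqrt{d_\K}\,\Ical\bigr)\subseteq d_\K\Ical$, obtain surjectivity of $\widetilde{\psi}_B$ from the rational generators together with the maximality of $\Gamma^*_\K(\Ical)$ due to Maaß, and finish by showing that the Atkin--Lehner matrices $V_\ell$, $\ell\neq 1,m$, violate the kernel condition. The bookkeeping you describe ($\det T=-d_\K$, evenness of $T=\tfrac{1}{N}S$ via $\Ical\Ical'=N\oh_\K$, conjugation by $H_N$ converting the $S$-picture into the $T$-picture) is exactly what the paper leaves implicit, together with its one stated observation $\Ical^{-1}=\tfrac{1}{N}\Ical'$.

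One step, as you state it, would fail. You propose to exclude $V_\ell$ ($\ell\neq 1,m$) by checking $F_{V_\ell}(\sqrt{d_\K})\notin d_\K\Ical$, carrying over verbatim the test from the proof of Theorem~\ref{theorem_2}. That test was legitimate there because $1\in\oh_\K$, so $\sqrt{d_\K}$ is an element of the lattice $\sqrt{d_\K}\,\oh_\K$ on which the criterion is formulated. For a proper ideal $\Ical\neq\oh_\K$ we have $1\notin\Ical$, hence $\sqrt{d_\K}\notin\sqrt{d_\K}\,\Ical$, and the value $F_{V_\ell}(\sqrt{d_\K})$ carries no information about whether $F_{V_\ell}\bigl(\sqrt{d_\K}\,\Ical\bigr)\subseteq d_\K\Ical$: writing $1$ as a $\Q$-linear (not $\Z$-linear) combination of a $\Z$-basis of $\Ical$, the inclusion only forces $F_{V_\ell}(\sqrt{d_\K})\in d_\K\K$, so your non-membership can hold even when the inclusion you need to refute is true. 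The correct test, which is what the paper prescribes, is to evaluate $F_{V_\ell}$ on the $\Z$-basis \eqref{gl_20} of $\Ical$, i.e.\ on $\sqrt{d_\K}\,N$ and $\sqrt{d_\K}\,(t+\omega_\K)$; note also that the relevant $V_\ell$ here are the Atkin--Lehner matrices attached to $\Gamma^*_\K(\Ical)$ (with the modified entry $u=\ell/\gcd(\ell,N)+m+\sqrt{m}$), not those of \eqref{gl_7}. With this correction your plan coincides with the paper's proof.
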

\begin{proof}
 Proceed exactly as in the proof of Theorem \ref{theorem_2}. Observe that $\Ical^{-1} = \frac{1}{N} \Ical'$ and verify that
 \[
  (K-I) \in\Z^{2\times 2} T \;\Longleftrightarrow \; F_M\bigl(\sqrt{d_\K} \Ical\bigr) \subseteq d_\K \Ical.
 \]
 Use the basis \eqref{gl_20} of $\Ical$ in order to verify that 
 \[
  F_{V_\ell} \bigl(\sqrt{d_\K} \Ical\bigr) \subseteq d_\K \Ical \;\Longleftrightarrow \; \ell = 1 \;\text{or}\; \ell = m,
 \]
 whenever $\ell$ is a squarefree divisor of $d_\K$.
\end{proof}

We obtain an immediate application to congruence subgroups of $\Gamma_\K$. 

\begin{corollary}\label{corollary_3} 
Let $N$ be a squarefree divisor of $d_\K$. Then the principal congruence subgroup
\[
\left\{M\in\Gamma_\K;\; M\equiv \varepsilon I\bmod{\Acal_N},\, \varepsilon\in \Z,\, \varepsilon^2\equiv 1\bmod{N}\right\}\big/\{\pm I\}
\]
is isomorphic to 
\[
 G^{-1}_N \Dcal\bigl(T_N;\Z)G_N , \;\, G_N=\diag(1,N,N,1), \;\, T = 
 \begin{pmatrix}
  2N & \omega_\K + \omega'_\K \\ \omega_\K + \omega'_\K & 2\omega_\K \omega'_\K/N
 \end{pmatrix}
\]
via $\phi_B$, where $B = (N,\omega_\K)$.
\end{corollary}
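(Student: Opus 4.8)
The plan is to run the argument of Theorem \ref{theorem_3} with the primitive ideal $\Ical=\Acal_N$ and its $\Z$-basis $B=(N,\omega_\K)$, and then cut out the congruence subgroup $\Gamma$ inside $\Gamma_\K(\Acal_N)$. First I would verify from \eqref{gl_17} that the Gram matrix $S$ of $B=(N,\omega_\K)$ satisfies $\tfrac1N S=T$ with $T$ as in the statement, and that $T$ is even and integral: this rests on $N\mid\omega_\K\omega'_\K$, which holds because $\Acal_N=\Z N+\Z\omega_\K$ has reduced norm $N$ (the basis \eqref{gl_20} with $t=0$). A short computation gives $\det T=-d_\K<0$, so $T$ is admissible. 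Since a member $M$ of $\Gamma$ has $\beta,\gamma\in\Acal_N$ and $\Acal_N^{-1}=\tfrac1N\Acal_N\supseteq\Acal_N$, we have $\Gamma\subseteq\Gamma_\K(\Acal_N)$; hence by Theorem \ref{theorem_3} the assignment $\pm M\mapsto\widetilde M$ from \eqref{gl_18} is already an injective homomorphism on $\Gamma/\{\pm I\}$, and only the image has to be determined.

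For the image I would insert the three defining congruences into \eqref{gl_18}. Writing $M\equiv\varepsilon I\bmod\Acal_N$ gives $\beta,\gamma\in\Acal_N$ and $\alpha,\delta\equiv\varepsilon\bmod\Acal_N$; combined with $\det M=1$ and $\beta\gamma\in\Acal_N^2=N\oh_\K$ this forces $\alpha\delta\equiv\varepsilon^2\equiv1\bmod N$, which is precisely where the integer condition $\varepsilon^2\equiv1\bmod N$ enters. Via $\varphi_B$ and the endomorphism $f_M$ these translate into divisibilities on the vectors $a,b,c,d$ and on $K-I$; conjugating $\widetilde M$ by $G_N=\diag(1,N,N,1)$ and comparing with $T_N=\left(\begin{smallmatrix}0&0&N\\0&T&0\\N&0&0\end{smallmatrix}\right)$ turns these divisibilities into the membership $G_N\widetilde M G_N^{-1}\in I+\Z^{4\times4}T_N$, i.e.\ into the discriminant-kernel condition \eqref{gl_1} for $T_N$. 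This yields $\phi_B(\Gamma/\{\pm I\})\subseteq G_N^{-1}\Dcal(T_N;\Z)G_N$.

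For surjectivity I would reverse this translation in the spirit of the proof of Theorem \ref{theorem_2}: unravelling $G_N\widetilde M G_N^{-1}\in I+\Z^{4\times4}T_N$ column by column—columns $1$ and $4$ lie in $N\Z^4$, columns $2,3$ in $\Z^{4\times2}T$—recovers exactly $\beta,\gamma\in\Acal_N$ and $\alpha\equiv\delta\bmod\Acal_N$, while the relation $SX=\pm d_\K J$ used in Theorem \ref{theorem_2} reduces the $K$-block condition to $F_M(\sqrt{d_\K}\Acal_N)\subseteq d_\K\Acal_N$, which characterises $\alpha\equiv\delta\equiv\varepsilon\bmod\Acal_N$. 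Thus the preimage of $G_N^{-1}\Dcal(T_N;\Z)G_N$ under $\phi_B$ is precisely $\Gamma/\{\pm I\}$, and $\phi_B$ is the asserted isomorphism.

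The hard part will be the scaling bookkeeping in this last translation: two independent rescalings are in play—the conjugation by $G_N=\diag(1,N,N,1)$ acting on the $K$-block and the factor $N$ sitting in the hyperbolic plane of $T_N$—and one must check that together they reproduce the three congruences $\beta,\gamma\in\Acal_N$, $\alpha\equiv\delta\equiv\varepsilon\bmod\Acal_N$, and $\varepsilon^2\equiv1\bmod N$ with the correct multiplicities, neither losing the weaker $\Acal_N^{-1}$-condition already present in $\Gamma_\K(\Acal_N)$ nor double-counting the norm relation $\Acal_N^2=N\oh_\K$. Everything else is the routine substitution into \eqref{gl_18} already carried out for Theorems \ref{theorem_2} and \ref{theorem_3}.
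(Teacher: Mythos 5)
Your overall plan --- apply Theorem \ref{theorem_3} with $\Ical=\Acal_N$ and $B=(N,\omega_\K)$, check $T=\tfrac 1N S$, $\det T=-d_\K$, $\Gamma\subseteq\Gamma_\K(\Acal_N)$, and then translate the congruences through \eqref{gl_18} and the conjugation by $G_N$ --- is exactly the paper's proof (``apply Theorem \ref{theorem_3} to $\Acal_N$ and follow the proof of Corollary \ref{corollary_2}\,a)''). But your surjectivity step contains a genuine error: you claim that the $K$-block condition $K-I\in\Z^{2\times 2}T$, equivalently $F_M\bigl(\sqrt{d_\K}\Acal_N\bigr)\subseteq d_\K\Acal_N$, ``characterises $\alpha\equiv\delta\equiv\varepsilon\bmod\Acal_N$''. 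It cannot. In $T_N$ from \eqref{gl_2} only the hyperbolic plane is rescaled by $N$; the middle block of $T_N$ is still $T$, so the $K$-block condition you extract from $G_N\widetilde MG_N^{-1}\in I+\Z^{4\times4}T_N$ is word for word the condition already contained in $\Dcal(T_1;\Z)$, and by Theorem \ref{theorem_3} it is satisfied by \emph{every} $M\in\Gamma_\K(\Acal_N)$. For instance $M=\left(\begin{smallmatrix}1&\omega_\K\\0&1\end{smallmatrix}\right)\left(\begin{smallmatrix}1&0\\\omega_\K/N&1\end{smallmatrix}\right)\in\Gamma_\K(\Acal_N)$ satisfies the $K$-block condition, yet (e.g.\ for $m=6$, $N=3$) its diagonal entries are $\equiv 2$ and $\equiv 1$ modulo $\Acal_N$, so it is not in the principal congruence subgroup. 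You have transplanted the mechanism of Corollary \ref{corollary_2}\,b), where the lattice is $NS_1$ and the middle block genuinely is $NS$, into a situation where that leverage does not exist; as written, the step recovering $\alpha\equiv\delta\equiv\varepsilon\bmod\Acal_N$ would fail.

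The congruence information in fact sits entirely in columns $1$ and $4$ of the $T_N$-condition: they give $\alpha\alpha'\equiv\delta\delta'\equiv 1\bmod N$, $\beta\beta',\gamma\gamma'\in N\Z$, and integrality of $b,d$, i.e.\ $\alpha\gamma',\gamma'\delta\in\Acal_N$. From $\alpha\alpha'\equiv 1\bmod N$ the element $\alpha$ is coprime to $\Acal_N$, so $\alpha\gamma'\in\Acal_N$ forces $\gamma\in\Acal_N$ (use $\Acal_N'=\Acal_N$ and $\Acal_N^2=N\oh_\K$); then $\det M=1$ yields $\alpha\delta\equiv 1\bmod N\oh_\K$. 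Now use that $\oh_\K/\Acal_N\cong\Z/N\Z$ (the residue field at each ramified prime dividing $N$ is $\F_p$): write $\alpha\equiv a$, $\delta\equiv d\bmod\Acal_N$ with $a,d\in\Z$; reducing $\alpha\alpha'\equiv\delta\delta'\equiv\alpha\delta\equiv 1$ modulo $\Acal_N$ gives $a^2\equiv d^2\equiv ad\equiv 1\bmod N$, hence $a\equiv d=:\varepsilon$ and $\varepsilon^2\equiv 1\bmod N$. This residue-field argument, together with the column-$1$/column-$4$ bookkeeping, is what ``following Corollary \ref{corollary_2}\,a)'' amounts to; the $K$-block contributes nothing beyond what Theorem \ref{theorem_3} already guarantees. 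With that replacement your proof goes through, since the forward direction and the containment $G_N^{-1}\Dcal(T_N;\Z)G_N\subseteq\phi_B\bigl(\Gamma_\K(\Acal_N)/\{\pm I\}\bigr)$ you implicitly use are both correct.
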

\begin{proof}
Apply Theorem \ref{theorem_3} to $\Ical = \Acal_N = \Z N + \Z\omega_\K$ and follow the proof of Corollary \ref{corollary_2}\,a). 
\end{proof}

We add a final Remark.

\begin{remark}\label{Remark_3} 
The congruence conditions with respect to $N$ as opposed to more general $\Ical$ in Corollaries \ref{corollary_2} and \ref{corollary_3} are  equivalent to the 
 restriction to ideals $\Ical$ satisfying $\Ical = \Ical'$. It is not only motivated by technical reasons, but also by the fact that symmetric Hilbert modular forms, i.e. $f(\tau_2,\tau_1) =f(\tau_1,\tau_2)$, can be defined for these subgroups.
\end{remark}

\noindent\textbf{Statements and Declarations.} The authors declare that there are no competing conflicts of interests and that they did not receive any funding.
The authors did not use any data from a data repository

\nocite{Hu}

\vspace{6ex}


\bibliography{bibliography_krieg_2021} 
\bibliographystyle{plain}

\end{document}